\newtheorem{theorem}{Theorem}
\newtheorem{lemma}[theorem]{Lemma}
\newtheorem{proposition}[theorem]{Proposition}
\theoremstyle{definition}
\newcommand{\co}{\colon\thinspace}
\DeclareMathOperator{\Area}{Area}
\DeclareMathOperator{\const}{const}
\DeclareMathOperator{\Vol}{Vol}
\begin{document}

\title[Macroscopic stability and simplicial norms of hypersurfaces]{Macroscopic stability and simplicial norms of hypersurfaces}
\author{Hannah Alpert}
\address{231 West 18th Avenue, Columbus, OH 43210}
\email{alpert.19@osu.edu}
\subjclass[2010]{53C23, 53A10}

\begin{abstract}
We introduce a $\mathbb{Z}$--coefficient version of Guth's macroscopic stability inequality for almost-minimizing hypersurfaces.  In manifolds with a lower bound on macroscopic scalar curvature, we use the inequality to prove a lower bound on areas of hypersurfaces in terms of the Gromov simplicial norm of their homology classes.  We give examples to show that a very positive lower bound on macroscopic scalar curvature does not necessarily imply an upper bound on the areas of minimizing hypersurfaces.
\end{abstract}

\maketitle

\section{Introduction}

The main theorem of this paper is the following.

\begin{theorem}\label{main-thm}
Let $(M, g)$ be a closed Riemannian manifold of dimension $n$, with the property that in the universal cover of $M$ every ball of radius $1$ has volume less than or equal to some number $V_0$.  We also assume that $\pi_1(M)$ is residually finite.  Let $\Sigma$ be a $2$--sided embedded closed hypersurface in $M$, and let $i \co \Sigma \hookrightarrow M$ denote the inclusion.  Then we have the inequality
\[\Area \Sigma \geq \const(n, V_0) \cdot \Vert i_*[\Sigma]\Vert_{\Delta},\]
where the constant is positive and $\Vert i_*[\Sigma]\Vert_{\Delta}$ denotes the Gromov simplicial norm of the homology class $i_*[\Sigma] \in H_{n-1}(M; \mathbb{R})$ corresponding to $\Sigma$.
\end{theorem}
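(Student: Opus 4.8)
The plan is to bound the simplicial norm from above by the area, by manufacturing an explicit singular cycle that represents $i_*[\Sigma]$ and has total mass controlled by $\Area\Sigma$. Since the simplicial norm and the area behave predictably under passage to finite covers, and since replacing $\Sigma$ by an area minimizer in its homology class only decreases the left-hand side while leaving the right-hand side unchanged, I would first reduce to the situation in which $\Sigma$ is almost area-minimizing and in which unit balls are geometrically controlled. Concretely, using the residual finiteness of $\pi_1(M)$ I would pass to a finite regular cover in which every ball of radius $1$ is embedded, hence isometric to a ball in the universal cover and so of volume at most $V_0$; this transplants the hypothesis from the universal cover to a compact manifold where $\Sigma$ and its homology class still make sense.

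The geometric engine is the macroscopic stability inequality. Applied to the almost-minimizing hypersurface $\Sigma$, it controls the way $\Sigma$ meets each ball of radius $1$: an essential sheet of $\Sigma$ contributes a definite amount of area, and the number of sheets crossing a given ball is bounded in terms of $n$ and $V_0$. I would combine this with a Vitali-type covering of $\Sigma$ by unit balls of bounded multiplicity---the multiplicity bound being a consequence of $\Vol \leq V_0$---to conclude that the number of unit balls meeting $\Sigma$ is at most $\const(n,V_0)\cdot\Area\Sigma$.

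From this covering I would build a coarse combinatorial model of $\Sigma$ at scale $1$: a simplicial complex $K$ of dimension $n-1$, for instance a nerve or a dual complex of the covering, together with a simplicial map to $M$ obtained by sending vertices to ball centers and straightening higher simplices via geodesics in the universal cover. The number of top-dimensional simplices of $K$ is bounded by a constant times the number of balls, hence by $\const(n,V_0)\cdot\Area\Sigma$. The $\mathbb{Z}$--coefficient refinement of the macroscopic stability inequality is exactly what lets me assemble these pieces into a genuine integral cycle whose class is $i_*[\Sigma]$ rather than some unrelated class: it keeps track of the signed multiplicities with which the sheets of $\Sigma$ cross each ball, so that the coarse cycle and $\Sigma$ are homologous in $M$. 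Bounding the simplicial norm of a class by the number of simplices in any representing cycle then yields $\Vert i_*[\Sigma]\Vert_\Delta \leq \const(n,V_0)\cdot\Area\Sigma$, which is the desired inequality.

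I expect the main obstacle to be precisely this last bookkeeping: translating the analytic control of the minimizing hypersurface into a bona fide integral simplicial cycle that is homologous to $i_*[\Sigma]$ and whose mass is genuinely proportional to $\Area\Sigma$. Verifying that the straightened coarse model carries the correct homology class---and that passing to the finite cover and back does not corrupt the constant---is where the integer-coefficient version of the stability inequality is indispensable, and where the delicate estimates of the argument are concentrated.
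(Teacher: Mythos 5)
Your reductions match the paper's: passing to a finite cover using residual finiteness so that unit balls lift isometrically, replacing $\Sigma$ by an almost-minimizer in its homology class, and invoking the macroscopic stability inequality. But from that point on the proposal diverges in a way that contains both a misstatement and a substantial gap. First, the misstatement: the $\mathbb{Z}$--coefficient stability inequality does \emph{not} bound the number of sheets of $\Sigma$ crossing a unit ball of $M$, and no such bound is available. With integer coefficients, many parallel sheets oriented the same way do not cancel and cannot be traded for anything smaller, so an almost-minimizer can meet a single ball of $M$ in arbitrarily many sheets. What the lemma actually gives is that each \emph{intrinsic} ball $B_{(\Sigma, g\vert_\Sigma)}(p, 1/2)$ has area at most $2V_0 + 1$; its only role in the proof is to verify a hypothesis about the internal geometry of $\Sigma$. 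It plays no part in any ``bookkeeping of signed multiplicities'' when assembling a cycle, contrary to your last paragraph. Relatedly, your claim that a Vitali cover of bounded multiplicity follows from the upper bound $\Vol \leq V_0$ is backwards: bounding multiplicities and counting balls against $\Area \Sigma$ requires \emph{lower} volume bounds on balls, which you do not have.

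The substantial gap is the step from ``$\Sigma$ has unit balls of volume at most $\const(n,V_0)$'' to ``$\Vert i_*[\Sigma]\Vert_\Delta \leq \const(n,V_0)\cdot \Area\Sigma$.'' This is precisely the content of a theorem of Guth (Theorem~\ref{thm-attr} in the paper, in the form adapted in~\cite{Alpert17}), whose proof is an entire technical paper; it cannot be dispatched by ``take the nerve of a covering and straighten simplices by geodesics.'' Two specific obstructions: (i) the count of balls needed to cover $\Sigma$ is not controlled by $\Area\Sigma$ without further work, since small balls may have tiny volume, which is why Guth's argument is multi-scale rather than a single Vitali cover; and (ii) producing from the nerve a singular cycle in the correct homology class requires mapping into an \emph{aspherical} target in which the covering sets become null-homotopic --- the paper uses the classifying map $M \rightarrow K(\pi_1(M),1)$ together with Gromov's Mapping Theorem to transfer the simplicial norm back, and uses the finite cover to guarantee that loops of length at most $2$ die in $\pi_1$. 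Mapping the nerve directly into $M$, which need not be aspherical, does not give a well-defined map up to homotopy, and there is no reason the resulting cycle represents $i_*[\Sigma]$. You correctly identify this as the delicate point, but the proposal as written offers no mechanism for it; the intended proof resolves it by quoting Theorem~\ref{thm-attr} rather than reconstructing it.
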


This theorem is a macroscopic analogue of theorems about minimal surfaces.  The hypothesis about volumes of unit balls in the universal cover can be thought of as a lower bound on macroscopic scalar curvature; just as a lower bound on scalar curvature gives an upper bound on volumes of infinitesimal balls, here we have an upper bound on volumes of larger-scale balls.  Gromov has suggested that theorems about scalar curvature should have analogues involving volumes of unit balls; see Guth's exposition~\cite{Guth10'} which includes a definition of the term ``macroscopic scalar curvature''.

The $3$--dimensional version of Theorem~\ref{main-thm} with minimal surfaces says that if the scalar curvature of $M^3$ is at least that of $\mathbb{H}^2 \times \mathbb{R}$, then every $2$--sided stable minimal hypersurface has area at least that of the hyperbolic surface of the same genus; if the scalar curvature of $M$ is at least that of $S^2 \times \mathbb{R}$, then every $2$--sided stable minimal hypersurface is a sphere and has area at \emph{most} that of $S^2$~\cite{Schoen79, Shen97, Nunes13}.  This second statement, about positive scalar curvature, is false for macroscopic scalar curvature, as we discuss in Propositions~\ref{top} and~\ref{area} at the end of the paper.

In higher dimensions, comparing the scalar curvature to that of $\mathbb{H}^{n-1} \times \mathbb{R}$ gives a comparison between the $(n-1)$--dimensional area of a $2$--sided stable minimal hypersurface and its Yamabe invariant~\cite{Cai01, Moraru16}.  In Theorem~\ref{main-thm}, we compare the area of a hypersurface to its simplicial norm.  Because this  simplicial norm depends on the homology class of the embedding rather than on the intrinsic topology of the hypersurface, there is no need to include a hypothesis about the minimality of $\Sigma$.  

The Gromov simplicial norm, introduced in~\cite{Gromov82}, is a semi-norm on singular homology with coefficients in $\mathbb{R}$.  Given a homology class $h$, the \textbf{\textit{simplicial norm}} $\Vert h \Vert_{\Delta}$ is defined to be the infimum over all cycles $c$ representing $h$ of the sum of absolute values of coefficients in $c$.  It is often zero.  For closed manifolds the simplicial norm of the fundamental homology class is called the simplicial volume, and for closed hyperbolic manifolds the simplicial volume is a constant times the hyperbolic volume; the constant is the volume of a regular ideal simplex in $n$--dimensional hyperbolic space~\cite{Gromov82}.  Thus, like the Yamabe invariant, the simplicial norm can be thought of as a topological generalization of hyperbolic volume.  It has the nice property of being multiplicative under covering maps.  For arbitrary continuous maps $f \co X \rightarrow Y$ we have $\Vert f_*h \Vert_{\Delta} \leq \Vert h \Vert_{\Delta}$ for all $h \in H_*(X; \mathbb{R})$.

Taking Theorem~\ref{main-thm} in the case where $M$ is diffeomorphic to the product of a hyperbolic $(n-1)$--manifold with $S^1$, and the hypersurface $\Sigma$ is homologous to the hyperbolic factor, we get a comparison between the area of $\Sigma$ and the hyperbolic area of the hyperbolic factor.  Thus Theorem~\ref{main-thm} is a generalization of the main theorem from the paper~\cite{Alpert17} of the present author with Kei Funano, under the additional hypothesis that $\Sigma$ is $2$--sided; for $1$--sided $\Sigma$, the methods of that paper still prove something more.

In Section~\ref{section-main} we prove Lemma~\ref{main-lemma}, the macroscopic stability inequality with $\mathbb{Z}$--coefficients, based on the stability lemma from Guth's paper~\cite{Guth10}.  We use it to prove Theorem~\ref{main-thm} using a theorem from Guth's paper~\cite{Guth11}.  Then in Section~\ref{section-positive} we discuss the case of positive lower bounds on macroscopic scalar curvature.  An upper bound on the areas of minimizing hypersurfaces seems to be false, but in Proposition~\ref{surface-prop} we give an upper bound on the areas of a $2$--dimensional surface in terms of areas of balls in the universal covers of balls in the surface.

\emph{Acknowledgments.}  The questions explored in this paper were suggested by Andr\'e Neves, who also pointed out the references about areas of minimal surfaces under scalar-curvature hypotheses.

\section{Stability lemma and bounded negative curvature}\label{section-main}

We prove Theorem~\ref{main-thm} using a macroscopic stability inequality.  For stable minimal surfaces, the curvature of the ambient space and the curvature of the surface are related by the second variation formula and the Gauss equations.  Here we measure curvature macroscopically in terms of volumes of balls, and instead of stable minimal hypersurfaces we use hypersurfaces that are almost area-minimizing in their homology class.  The macroscopic stability inequality we use here is an adaptation of the one introduced by Guth in~\cite{Guth10}.  There he uses homology with coefficients in $\mathbb{Z}_2$, and here we use coefficients in $\mathbb{Z}$.  As in~\cite{Guth10} we define the \textbf{\textit{length}} $L(\alpha)$ of any cohomology class $\alpha$ in degree $1$ to be the infimal length of $1$--cycles $c$ with $\alpha(c) \neq 0$, and say that a hypersurface $\Sigma$ is \textbf{\textit{minimizing up to $\delta$}} if it is within $\delta$ of the infimal area of hypersurfaces in its homology class.

\begin{lemma}[Macroscopic stability inequality with $\mathbb{Z}$--coefficients]\label{main-lemma}
Let $(M, g)$ be a closed Riemannian manifold of dimension $n$.  Let $\alpha \in H^1(M; \mathbb{Z})$.  Suppose that $Z \subset M$ is a smooth embedded oriented hypersurface, Poincar\'e dual to $\alpha$ and minimizing up to $\delta$ (among embedded surfaces in its homology class).  Suppose that $R < \frac{1}{2}L(\alpha)$.  Then for any $p \in \Sigma$ we have
\[\Area B_{(\Sigma, g \vert_{\Sigma})}(p, R/2) \leq 2 R^{-1} \Vol B_{(M, g)}(p, R) + \delta.\]
\end{lemma}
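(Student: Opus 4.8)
Write $\Sigma = Z$ for the given hypersurface, let $\rho = d_M(p, \cdot)$ be the ambient distance to $p$, and for $r \in (R/2, R)$ abbreviate $W(r) = \Area(\Sigma \cap B_M(p, r))$ and write $s(r)$ for the $(n-1)$--dimensional volume of the metric sphere $S_M(p, r)$. The plan is to run a cut-and-paste competitor argument in the spirit of Guth's $\mathbb{Z}_2$ stability lemma, but with honest $\mathbb{Z}$--coefficients. For a suitable radius $r$ I will build a competitor $\Sigma_r'$ by discarding $\Sigma \cap B_M(p,r)$ and capping off the resulting boundary $\Sigma \cap S_M(p,r)$ with a region $V$ of the sphere $S_M(p,r)$; since $\Area V \le s(r)$, this gives $\Area \Sigma_r' \le \Area\Sigma - W(r) + s(r)$. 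Because $\Sigma$ is minimizing up to $\delta$ in its homology class, any such competitor satisfies $\Area \Sigma \le \Area \Sigma_r' + \delta$, and hence $W(r) \le s(r) + \delta$.

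The step that does the real work, and the only place the hypothesis $R < \frac{1}{2} L(\alpha)$ is used, is checking that $\Sigma_r'$ really lies in the homology class of $\Sigma$, so that it is an admissible competitor. The key point is that $\alpha$ restricts to $0$ in $H^1(B_M(p,R); \mathbb{Z})$. To see this I would take any loop $\gamma \subset B_M(p,R)$, subdivide it into short arcs $\gamma|_{[t_i, t_{i+1}]}$, and replace each arc by the loop $\lambda_i$ that runs from $p$ along a minimizing geodesic to $\gamma(t_i)$, across the arc, and back along a minimizing geodesic to $p$. The radial segments telescope, so $\gamma = \sum_i \lambda_i$ as $1$--cycles, while for a fine enough subdivision each $\lambda_i$ has length less than $2R < L(\alpha)$; by the definition of $L(\alpha)$ this forces $\alpha(\lambda_i) = 0$, and therefore $\alpha(\gamma) = 0$. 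As this holds for every loop, $\alpha|_{B_M(p,R)} = 0$. By Lefschetz duality the class of $\Sigma \cap \overline{B_M(p,r)}$ in $H_{n-1}(\overline{B_M(p,r)}, S_M(p,r); \mathbb{Z})$ corresponds to this vanishing restriction, so $\Sigma \cap \overline{B_M(p,r)}$ is homologous rel boundary to a region $V$ of the sphere bounded by the slice $\Sigma \cap S_M(p,r)$, and replacing one by the other preserves the homology class. I expect this homological bookkeeping, together with choosing $r$ to be a regular value and smoothing the corner created along $\Sigma \cap S_M(p,r)$ so that $\Sigma_r'$ is again embedded, to be the most delicate part.

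With a supply of competitors in hand the remaining estimate is a routine coarea computation. The ambient coarea formula gives $\int_0^R s(r)\, dr = \Vol B_M(p,R)$, since $\abs{\nabla \rho} = 1$ almost everywhere, so averaging over the interval $(R/2, R)$, which has length $R/2$, produces a regular value $r \in (R/2, R)$ with $s(r) \le 2R^{-1}\Vol B_M(p,R)$. Finally, since the intrinsic distance on $\Sigma$ dominates the ambient distance we have $B_{(\Sigma, g\vert_\Sigma)}(p, R/2) \subset \Sigma \cap B_M(p, R/2)$, and $W$ is nondecreasing in $r$, so combining with $W(r) \le s(r) + \delta$ yields
\[ \Area B_{(\Sigma, g\vert_\Sigma)}(p, R/2) \le W(R/2) \le W(r) \le s(r) + \delta \le 2R^{-1}\Vol B_M(p,R) + \delta, \]
which is exactly the asserted inequality.
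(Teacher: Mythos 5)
The overall strategy---excise $\Sigma \cap B_M(p,r)$, cap the slice $\Sigma \cap S_M(p,r)$ with a piece of the sphere, and invoke almost-minimality---is the natural starting point, but the step where you assert $\Area V \le s(r)$ is exactly where the $\mathbb{Z}$--coefficient version breaks down, and it is the issue the paper explicitly warns about before giving its proof. With $\mathbb{Z}$ coefficients, the vanishing of $\alpha$ on $B_M(p,R)$ only tells you that the slice bounds \emph{some} integral $(n-1)$--chain in the sphere; that chain is not a region (a subset with multiplicity one) but a chain with integer multiplicities, and its mass can greatly exceed $s(r)$. Concretely, if $\Sigma$ crosses $B_M(p,r)$ in $k$ parallel sheets all oriented the same way---a configuration that the hypothesis $R < \frac{1}{2}L(\alpha)$ does not rule out---then the slice consists of $k$ nested copies of an equator with coherent orientations, and the minimal bounding chain is roughly $k$ times a hemisphere, with area on the order of $k \cdot s(r)$. (Under $\mathbb{Z}_2$ coefficients these sheets cancel in pairs, which is why Guth's original argument works there.) So your competitor only yields $W(r) \le k\,s(r) + \delta$, and the inequality $W(r) \le s(r) + \delta$, i.e.\ Guth's form of the stability inequality with all of $\Area(\Sigma \cap B_M(p,r))$ on the left, is precisely what the paper says ``may be false for $\mathbb{Z}$--coefficients.'' This is also why the lemma is stated for the intrinsic ball $B_{(\Sigma, g|_\Sigma)}(p,R/2)$ rather than for $\Sigma \cap B_M(p,R/2)$: your argument, if it worked, would prove the stronger statement that the paper is deliberately avoiding.

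The paper repairs this by bounding each sheet separately rather than all of them at once. Your short-loop argument (which is correct, and matches the paper) shows every loop in $B(p,t)$ has zero signed intersection with $\Sigma$; this grades the components of $B(p,t)\setminus\Sigma$ into levels $L_1,\dots,L_k$ and groups the components of $\Sigma\cap B(p,t)$ into dividers $D_1,\dots,D_{k-1}$ separating consecutive levels. Each \emph{single} divider $D_i$, modified by adding a $\mathbb{Z}$--combination of boundaries of levels, can be pushed entirely into $S(p,t)$, so its minimal-area replacement $D'_i$ satisfies both $\Area D'_i \le \Area D_i$ and $\Area D'_i \le \Area S(p,t)$; monotonicity of the resulting indices keeps the simultaneous replacement embeddable after a small perturbation; and almost-minimality applied to the total replacement forces $\Area D_i - \Area D'_i \le \delta$ for each $i$ individually, hence $\Area D_i \le \Area S(p,t) + \delta$. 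Since $B_{(\Sigma,g|_\Sigma)}(p,R/2)$ lies inside the one divider containing $p$, the stated bound follows. Your coarea selection of $r$ is fine; the missing idea is this level/divider decomposition.
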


We prove this lemma with $\mathbb{Z}$--coefficients because the Gromov simplicial norm does not work well with $\mathbb{Z}_2$--coefficients.  We can count the nonzero coefficients in $\mathbb{Z}_2$--cycles, but the behavior under covering maps is not good.  Therefore, in order to use the simplicial norm in Theorem~\ref{main-thm} it is better to have the lemma with $\mathbb{Z}$--coefficients.  I hope that the version with $\mathbb{Z}$--coefficients will be more useful in other situations as well.

In Guth's version, the left-hand side of the inequality measures the area, not of a ball taken inside $\Sigma$, but of the portion of $\Sigma$ inside a ball in $M$.  That version of the inequality may be false for $\mathbb{Z}$--coefficients.  For instance, in a ball in $M$, the surface $\Sigma$ may have several parallel sheets.  Under $\mathbb{Z}_2$--coefficients, the sheets cancel and are homologous to something smaller, but under $\mathbb{Z}$--coefficients they do not cancel, if they are oriented in the same direction.  Thus, in the version with $\mathbb{Z}$--coefficients we show that each of these sheets inside a ball in $M$ is not too large, which implies that the balls taken inside $\Sigma$ are not too large.

\begin{proof}[Proof of Lemma~\ref{main-lemma}]
As in~\cite{Guth10}, we use the coarea inequality to select $t \in (\frac{1}{2}R, R)$ such that the sphere $S(p, t)$ in $M$ has area at most $2R^{-1}\Vol B_{(M, g)}(p, R)$, and such that $S(p, t)$ is smooth and has transverse intersection with $\Sigma$.

As in~\cite{Guth10}, the assumption $R < \frac{1}{2}L(\alpha)$ implies that every loop in the ball $B(p, t)$ has a zero signed intersection number with $\Sigma$.  We group the components of $B(p, t) \setminus \Sigma$ into ``levels'' $L_1, \ldots, L_k$ and group the components of $\Sigma \cap B(p, t)$ into ``dividers'' $D_1, \ldots, D_{k-1}$, such that for any path from level $L_i$ to level $L_j$, its signed intersection number with $\Sigma$ is $j-i$, and each divider $D_i$ is the boundary between levels $L_{i}$ and $L_{i+1}$.  We think of $D_0$ and $D_k$ as empty dividers before level $L_1$ and after level $L_k$.

For each divider $D_i$, we consider all the ways to modify $D_i$ by adding a $\mathbb{Z}$--linear combination of boundaries of levels, and let $D'_i$ be the way with the least area.  Then $D'_i$ consists of some $D_{j_i}$ (with $0 \leq j_i \leq k$) plus the portion of $S(p, t)$ lying between dividers $D_i$ and $D_{j_i}$.  Notice that the sequence $j_1, \ldots, j_{k-1}$ is (weakly) monotonic---the new surfaces $D'_1, \ldots, D'_{k-1}$ may cover some dividers and pieces of $S(p, t)$ multiple times but they do not cross.  Thus we can modify them very slightly so that replacing $D_1, \ldots, D_{k-1}$ by $D'_1, \ldots, D'_{k-1}$ in $\Sigma$ yields a new smooth embedded hypersurface homologous to $\Sigma$.

Because $\Sigma$ is area-minimizing up to $\delta$, we have
\[\sum_{i = 1}^{k-1} \Area D'_i \geq \sum_{i = 1}^{k-1} \Area D_i - \delta.\]
Thus, for each divider $D_i$ we have
\[\Area D_i - \delta \leq \Area D'_i \leq \Area S(p, t) \leq 2R^{-1}\Vol B(p, R).\]
The middle inequality is because $D'_i$ was chosen to be of minimal area when compared with taking $j_i = 0$ or $j_i = k$, both of which result in surfaces contained in $S(p, t)$.

Applying these inequalities to the divider $D_i$ containing the center $p$, we have
\[\Area B_{(\Sigma, g\vert_{\Sigma})}(p, R/2) \leq \Area D_i \leq 2R^{-1}\Vol B_{(M, g)}(p, R) + \delta.\]
\end{proof}

The proof of Theorem~\ref{main-thm} is very similar to the proof of the main theorem in~\cite{Alpert17}; there, we prove the following modified version of a theorem of Guth from~\cite{Guth11}.

\begin{theorem}[\cite{Alpert17, Guth11}]\label{thm-attr}
Let $(\Sigma, g)$ be a closed oriented Riemannian manifold of dimension $n$, such that every ball of radius $1$ in $\Sigma$ has volume at most $V_0$.  Let $A$ be an aspherical topological space (i.e., its universal cover is contractible), and suppose that $f \co \Sigma \rightarrow A$ is a continuous map such that $f(\gamma)$ is null-homotopic for every loop $\gamma \subset \Sigma$ of length at most $1$.  Then we have
\[\Vert f_*[\Sigma]\Vert_{\Delta} \leq \const(n, V_0) \cdot \Vol \Sigma.\]
\end{theorem}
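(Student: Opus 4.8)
The plan is to factor $f$, up to homotopy, through a simplicial complex whose number of top-dimensional simplices is controlled by $\Vol \Sigma$, and then to exploit that the simplicial norm does not increase under continuous maps. First I would fix a small radius $\rho = \rho(n)$ and choose a maximal $\rho$--separated net $\{x_1, \ldots, x_N\}$ in $\Sigma$, so that the balls $U_i = B(x_i, \rho)$ cover $\Sigma$ while the concentric balls $B(x_i, \rho/2)$ are pairwise disjoint. Let $K$ be the nerve of the cover $\{U_i\}$, and let $\psi \co \Sigma \to K$ be the map associated to a partition of unity subordinate to $\{U_i\}$. The hypothesis that every unit ball has volume at most $V_0$ is what must control the combinatorics of $K$: the aim is to bound the number of vertices of $K$ by $\const(n, V_0) \cdot \Vol \Sigma$ and to bound the multiplicity of the cover, so that each vertex lies in a bounded number of simplices and hence the number of $n$--simplices of $K$ is at most $\const(n, V_0) \cdot \Vol \Sigma$.

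Next I would build a map $\Psi \co K \to A$ with $\Psi \circ \psi \simeq f$, by induction over the skeleta of $K$. On each vertex $i$ set $\Psi(i) = f(x_i)$. For each edge $ij$, choose a point in $U_i \cap U_j$ and join $x_i$ to $x_j$ through it by a path of length less than $2\rho$; send the edge to the $f$--image of this path. The boundary of a $2$--simplex $ijk$ is then the $f$--image of a loop in $\Sigma$ assembled from these short paths, of total length less than $6\rho$, which is at most $1$ once $\rho \leq 1/6$. By hypothesis such a loop maps to a null-homotopic loop in $A$, so $\Psi$ extends over the $2$--simplex. Over each simplex of dimension $m \geq 3$ the obstruction to extending $\Psi$ lies in $\pi_{m-1}(A)$, which vanishes because $A$ is aspherical; thus the induction proceeds with no further constraints, and a standard nerve-lemma argument verifies $\Psi \circ \psi \simeq f$.

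Finally, since $f_*[\Sigma] = \Psi_* \psi_* [\Sigma]$ and the simplicial norm is nonincreasing under $\Psi_*$, I obtain $\Vert f_*[\Sigma] \Vert_{\Delta} \leq \Vert \psi_*[\Sigma] \Vert_{\Delta}$. The class $\psi_*[\Sigma] \in H_n(K; \mathbb{R})$ is carried by the $n$--skeleton of $K$ and is realized by the nerve map with coefficients bounded by a dimensional constant, so its simplicial norm is at most $\const(n)$ times the number of $n$--simplices of $K$, which by the first step is at most $\const(n, V_0) \cdot \Vol \Sigma$. Combining these inequalities yields the stated bound.

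I expect the main obstacle to be the combinatorial control of $K$ in the first step. An upper bound on volumes of unit balls alone does not bound the volumes of the small balls $B(x_i, \rho/2)$ from below, so a naive packing estimate does not immediately bound either $N$ or the multiplicity of the cover by $\Vol \Sigma$. Guth's contribution is precisely a covering lemma that extracts volume-controlled combinatorics from only the upper volume bound; the delicate point is to arrange $\rho$ small enough that the triangle loops above have length at most $1$, so that the hypothesis on $f$ applies, while simultaneously keeping the number of $n$--simplices bounded by $\const(n, V_0) \cdot \Vol \Sigma$. Reconciling these competing requirements is the heart of the argument, whereas the obstruction-theoretic construction of $\Psi$ is formal once the cover is in hand.
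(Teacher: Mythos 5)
You should first be aware that the paper does not prove this statement at all: it is quoted from \cite{Alpert17} and \cite{Guth11}, and the paper offers only the one-sentence heuristic that one replaces $\Sigma$ by the nerve of a covering by balls whose number is proportional to $\Vol \Sigma$. Your outline reproduces that heuristic, and your later steps are sound: extending $\Psi$ over $2$--simplices via the short-loop hypothesis, over higher skeleta via asphericity, and then invoking functoriality of the simplicial norm is exactly the formal part of the argument. The genuine gap is where you suspect it, but it is worse than ``delicate'': for a maximal $\rho$--separated net at a single fixed scale $\rho = \rho(n)$, both combinatorial claims you need --- that the number of vertices is at most $\const(n, V_0) \cdot \Vol \Sigma$, and that the multiplicity of the cover is bounded --- are simply false. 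Take $\Sigma$ to be the product of $n-1$ circles of circumference $\varepsilon$ with one circle of circumference $3$: every unit ball has volume at most $3\varepsilon^{n-1} \leq V_0$, yet any cover by $\rho$--balls requires on the order of $1/\rho$ balls while $\Vol \Sigma = 3\varepsilon^{n-1} \rightarrow 0$, so no packing argument can bound the vertex count by a constant times the volume. Likewise, a ball of radius $3\rho/2$ of tiny volume can contain arbitrarily many $\rho$--separated points, so the local valence of the nerve, and hence the number of $n$--simplices per vertex, is not bounded by $\const(n, V_0)$ either.

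This is not a removable technicality; it is the entire content of the theorem, and it is why Guth's proof is long. His construction uses balls of varying radii selected by a volume-comparison (doubling-type) condition rather than a single scale, and the $\ell^1$--norm of the fundamental cycle pushed into the $n$--skeleton of the nerve is controlled through the structure of that cover rather than by counting simplices of a fixed-scale nerve. Deferring all of this to an unstated ``covering lemma of Guth'' while proposing, as the concrete plan, fixed-scale bounds that fail in simple examples leaves the proof without its central step. Note also that in the degenerate example above the conclusion of the theorem survives only because the short-loop hypothesis forces $f_*[\Sigma] = 0$ (the $\varepsilon$--circles die in $\pi_1(A)$, so $f$ factors through a circle up to homotopy); any correct argument must detect this collapse, and the nerve of a single-scale net does not.
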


The proof in~\cite{Guth11} is quite technical and involved, but very roughly, the idea is to replace $\Sigma$ by the nerve of a covering by balls such that the number of balls is at most proportional to the volume of $\Sigma$.  Using this theorem and Lemma~\ref{main-lemma} we can finish the proof of Theorem~\ref{main-thm}.

\begin{proof}[Proof of Theorem~\ref{main-thm}]
We find a finite cover $\widehat{M}$ of $M$ such that every loop in $\widehat{M}$ of length at most $2$ is null-homotopic.  This is possible because $\pi_1(M)$ is residually finite: only finitely many elements of $\pi_1(M)$ come from short loops, so we can choose $\pi_1(\widehat{M})$ to be a finite-index subgroup of $\pi_1(M)$ that excludes them all.  Let $\widehat{Z}$ be one connected component of the preimage of $\Sigma$ in $\widehat{M}$.  We claim that it suffices to prove Theorem~\ref{main-thm} for $\widehat{M}$ and $\widehat{\Sigma}$ instead of for $M$ and $\Sigma$.  Indeed, let $\pi \co \widehat{M} \rightarrow M$ denote the covering map, let $\widehat{i} \co \widehat{\Sigma} \hookrightarrow \widehat{M}$ denote the inclusion, and suppose that $\widehat{\Sigma}$ has $k$ sheets over $\Sigma$.  Because the orientation of $\Sigma$ pulls back to $\widehat{\Sigma}$, all $k$ sheets are oriented the same way.  Thus, we know 
\[k \cdot \Area Z = \Area \widehat{Z}\]
and
\[\Vert \widehat{i}_*[\widehat{Z}]\Vert_{\Delta} \geq \Vert \pi_* \widehat{i}_* [\widehat{Z}] \Vert_{\Delta} = \Vert k \cdot i_*[Z] \Vert_{\Delta} = k \cdot \Vert i_*[Z]\Vert_{\Delta},\]
so if we can show
\[\Area \widehat{Z} \geq \const(n, V_0) \cdot \Vert \widehat{i}_*[\widehat{Z}]\Vert_{\Delta},\]
 then we can string together the inequalities to get
 \[k \cdot \Area Z \geq \const(n, V_0) \cdot k \cdot \Vert i_*[Z] \Vert_{\Delta},\]
 which is $k$ times the desired inequality.
 
 Thus we may replace $M$ by $\widehat{M}$ and assume that every loop in $M$ of length at most $2$ is null-homotopic.  Then every unit ball in $M$ has volume at most $V_0$.  We may also assume that $\Sigma$ is minimizing up to $\delta = 1$ among embedded hypersurfaces in its homology class.  Applying Lemma~\ref{main-lemma} with $R = 1$ we find that every ball of radius $\frac{1}{2}$ in $\Sigma$ has volume at most $2V_0 + 1$.
 
 To apply Theorem~\ref{thm-attr} we need a map from $\Sigma$ to an aspherical topological space.  Let $A = K(\pi_1(M), 1)$ and let $f \co \Sigma \rightarrow A$ be the composition of the inclusion $i \co \Sigma \hookrightarrow M$ with the classifying map $M \rightarrow K(\pi_1(M), 1)$.  This classifying map preserves simplicial norm (Mapping Theorem from Section~3.1 of~\cite{Gromov82}, or see~\cite{Ivanov87}), so we have
 \[\Vert f_*[\Sigma] \Vert_{\Delta} = \Vert i_*[\Sigma] \Vert_{\Delta}.\]  
 Loops of length at most $2$ are null-homotopic in $M$ and hence also in $\Sigma$, so we may apply Theorem~\ref{thm-attr} (with $\Sigma$ scaled by $2$ to get a bound on unit balls rather than on balls of radius $\frac{1}{2}$) to get
 \[\Vert f_*[\Sigma] \Vert_\Delta \leq \const(n, 2^{n-1}(2V_0 + 1)) \cdot \Area \Sigma,\]
 and we obtain
 \[\Vert i_*[\Sigma] \Vert_{\Delta} \leq \const(n, V_0) \cdot \Area \Sigma.\]
\end{proof}

\section{Positive curvature}\label{section-positive}

A \emph{positive} lower bound on scalar curvature should give an \emph{upper} bound on the areas of stable minimal hypersurfaces.  In $3$ dimensions, if the scalar curvature is at least that of $S^2 \times \mathbb{R}$, then every $2$--sided stable minimal hypersurface is a sphere and has area at most that of $S^2$.  We can ask whether the same is true for macroscopic scalar curvature: if every ball of radius $1$ in the universal cover of a manifold $M$ has very tiny volume, do the area-minimizing hypersurfaces have constrained topology and an upper bound on their areas?  Proposition~\ref{top} shows that the topology is constrained in some sense, but Proposition~\ref{area} shows that we cannot expect an upper bound on the areas.

\begin{proposition}\label{top}
There exists a constant $V_0 > 0$ such that the following holds.  Suppose that $M$ is a $3$--dimensional Riemannian manifold diffeomorphic to the product of a closed surface $\Sigma$ with $S^1$, such that every ball of radius $1$ in the universal cover of $M$ has volume at most $V_0$.  Then $\Sigma$ must be the sphere or the projective plane.
\end{proposition}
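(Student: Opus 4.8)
The plan is to show that if $M \cong \Sigma \times S^1$ has the property that every unit ball in its universal cover has small volume, then the surface $\Sigma$ cannot have negative Euler characteristic, and thus must be one of the two surfaces ($S^2$ or $\mathbb{RP}^2$) of nonnegative Euler characteristic with trivial simplicial volume. The mechanism I would use is Theorem~\ref{main-thm} itself: if $\Sigma$ has genus at least $1$ (orientable case) or is a nonorientable surface other than $\mathbb{RP}^2$, then its simplicial volume $\Vert[\Sigma]\Vert_{\Delta}$ is positive, and this positivity should be incompatible with the smallness of $V_0$.

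\begin{proof}[Proof sketch]
First I would set up the relevant hypersurface. Take $\Sigma_0 = \Sigma \times \{pt\}$ inside $M \cong \Sigma \times S^1$, a $2$--sided embedded closed hypersurface whose homology class $i_*[\Sigma_0] \in H_2(M;\mathbb{R})$ is exactly the fundamental class of the $\Sigma$--factor. The key observation is that the classifying map $M \to K(\pi_1(M),1)$ sends $i_*[\Sigma_0]$ to a class whose simplicial norm equals $\Vert[\Sigma]\Vert_{\Delta}$, the simplicial volume of the surface itself. For a closed orientable surface of genus $g$ this equals $4g-4$ (which is positive once $g \geq 2$), and the surfaces with $\Vert[\Sigma]\Vert_{\Delta} = 0$ are precisely $S^2$, $\mathbb{RP}^2$, the torus, and the Klein bottle.

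Next I would apply Theorem~\ref{main-thm} to obtain $\Area \Sigma_0 \geq \const(n,V_0)\cdot\Vert[\Sigma]\Vert_{\Delta}$. The point of taking $V_0$ small is to run this inequality in reverse: when $V_0 \to 0$, one can rescale and make the area of a minimizing representative as small as one likes, forcing the simplicial norm to vanish. More carefully, I expect that making $V_0$ sufficiently small forces $\Vert[\Sigma]\Vert_{\Delta} = 0$, which rules out all surfaces of negative Euler characteristic. What remains is to eliminate the torus and Klein bottle, which also have vanishing simplicial volume but are not the sphere or projective plane; these must be excluded by a separate argument, presumably because $T^2 \times S^1$ and $(\text{Klein}) \times S^1$ are themselves aspherical, and an aspherical $3$--manifold whose universal cover has small unit balls would contradict a growth or isoperimetric estimate (a small $V_0$ forces the universal cover to look like low-dimensional Euclidean space macroscopically, but the fundamental group of these manifolds grows too fast).

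The main obstacle I anticipate is precisely this last step: distinguishing the flat cases (torus, Klein bottle) from the spherical ones. Simplicial volume alone cannot do it, so the argument must exploit the macroscopic curvature hypothesis more directly, perhaps by showing that a very small $V_0$ forces $\pi_1(M)$ to have subexponential or even polynomial growth of a restricted type, and then that $\pi_1 = \pi_1(\Sigma)\times\mathbb{Z}$ being infinite is itself incompatible with the required smallness. I would first verify the simplicial-volume dichotomy via Theorem~\ref{main-thm}, then handle the flat cases by a direct volume-growth estimate in the universal cover $\widetilde{M} \cong \widetilde{\Sigma}\times\mathbb{R}$, where small unit balls constrain how fast $\widetilde{\Sigma}$ can spread out.
\end{proof}
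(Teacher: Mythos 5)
There is a genuine gap, and it sits at the center of your argument. Theorem~\ref{main-thm} gives a \emph{lower} bound $\Area \Sigma_0 \geq \const(3, V_0)\cdot \Vert i_*[\Sigma_0]\Vert_{\Delta}$; to "run it in reverse" and conclude $\Vert[\Sigma]\Vert_{\Delta} = 0$, you would need an upper bound on $\Area \Sigma_0$ (or on the area of some minimizing representative) that beats the constant. The hypothesis supplies no such bound: for a fixed manifold the inequality is just a relation between two finite numbers and cannot force either to vanish, and in fact Proposition~\ref{area} of this very paper is the statement that a tiny $V_0$ is compatible with area-minimizing hypersurfaces of arbitrarily large area. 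The rescaling idea does not rescue this, since rescaling the metric changes the unit-ball volumes and the area together while leaving the simplicial norm (a topological invariant) untouched. Moreover, even if this step worked, it would only exclude surfaces of negative Euler characteristic, and you correctly observe that the torus and Klein bottle survive; your proposed treatment of those cases is left as a speculation about growth rates.

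The irony is that the vague fallback you sketch for the flat cases is essentially the entire correct proof, and it handles \emph{all} the non-spherical cases at once with no simplicial volume needed. The paper's argument is two lines: $\Sigma \times S^1$ is aspherical precisely when $\Sigma$ is not $S^2$ or $\mathbb{RP}^2$, and Guth proves in~\cite{Guth11} that there is a constant $\delta(n) > 0$ such that every closed aspherical Riemannian $n$--manifold has some unit ball in its universal cover of volume at least $\delta(n)$. Taking $V_0 = \delta(3)$ finishes the proof (alternatively, one can use Guth's estimate from~\cite{Guth10} for manifolds with a nonzero $n$--fold cup product in degree one, which $\Sigma \times S^1$ has for these $\Sigma$). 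I would recommend replacing the simplicial-norm detour entirely with this asphericity argument.
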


\begin{proof}
Guth proves in~\cite{Guth11} that there is a constant $\delta(n)$ such that for every closed aspherical Riemannian $n$--manifold, the volumes of unit balls in the universal cover are at least $\delta(n)$.  If $\Sigma$ is a surface other than the sphere or the projective plane, then $\Sigma \times S^1$ is aspherical, so we may take $V_0 = \delta(3)$ to get the desired result.  Alternatively, Guth proves in~\cite{Guth10} a similar estimate for $n$--manifolds where there is a nonzero $n$--fold cup product.
\end{proof}

Although having a very positive macroscopic scalar curvature constrains the topology of a manifold and its hypersurfaces, it does not necessarily result in an upper bound on the areas of stable minimal hypersurfaces.

\begin{proposition}\label{area}
For any $V_0 > 0$, any $K > 0$, and any $n$, there exists an $n$--dimensional closed Riemannian manifold $M$ such that every ball of radius $1$ in the universal cover of $M$ has volume at most $V_0$, but simultaneously $M$ contains an area-minimizing hypersurface of area at least $K$.
\end{proposition}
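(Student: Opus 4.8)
The plan is to produce a single explicit example meeting both demands at once: a product metric whose sphere factor can be made as thin as we like---pushing the macroscopic scalar curvature as high as desired---while the area is supplied by a large integer multiple of a fixed homology class, which the minimizer realizes as many parallel sheets. Concretely I would take $M = S^{n-1}(\epsilon)\times S^1$, the Riemannian product of a round sphere of radius $\epsilon$ with a circle of length one, and measure area in the class $k\cdot[S^{n-1}\times\{q\}]\in H_{n-1}(M;\mathbb{Z})$ for a large integer $k$ and a point $q\in S^1$. The two parameters decouple: shrinking $\epsilon$ controls the local geometry of the universal cover, while increasing $k$ inflates the minimizing area, and neither affects the other.

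First I would bound unit balls in the universal cover. Since $S^{n-1}$ is simply connected for $n\geq 3$, the universal cover is $\widetilde M = S^{n-1}(\epsilon)\times\mathbb{R}$ with the product metric, so any ball $B_{\widetilde M}(x,1)$ lies in $S^{n-1}(\epsilon)\times I$ for an interval $I$ of length $2$, giving $\Vol B_{\widetilde M}(x,1)\leq 2\,\Vol S^{n-1}(\epsilon)=2\,\omega_{n-1}\epsilon^{n-1}$, where $\omega_{n-1}$ is the volume of the unit round $(n-1)$--sphere. Choosing $\epsilon$ small enough that $2\,\omega_{n-1}\epsilon^{n-1}\leq V_0$ secures the hypothesis and fixes $\epsilon$. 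With $\epsilon$ fixed I would then pick $k$ with $k\,\omega_{n-1}\epsilon^{n-1}\geq K$ and take the hypersurface to be $k$ disjoint parallel copies of the fiber $S^{n-1}(\epsilon)\times\{q\}$---a $2$--sided embedded closed hypersurface representing $k\cdot[S^{n-1}\times\{q\}]$ whose total area is exactly $k\,\omega_{n-1}\epsilon^{n-1}\geq K$.

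The one step I expect to require care is showing that these $k$ parallel spheres are genuinely area-minimizing in their homology class, and for this I would use a calibration. Let $\phi$ be the closed $(n-1)$--form on $M$ obtained by pulling back the volume form of the sphere factor under the projection $\pi\co M\to S^{n-1}(\epsilon)$; it has comass one and restricts to the Riemannian volume form on each fiber. Because $\phi$ is closed, any hypersurface $T$ homologous to $k\cdot[S^{n-1}\times\{q\}]$ satisfies $\Area T\geq\int_T\phi=k\,\Vol S^{n-1}(\epsilon)=k\,\omega_{n-1}\epsilon^{n-1}$, with equality precisely for a union of fibers; hence the $k$ parallel fibers minimize area and no competitor beats them. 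This is exactly where the contrast with the scalar-curvature statement lives: a positive lower bound on scalar curvature bounds the area of a \emph{connected} stable minimal hypersurface, whereas macroscopically nothing stops the minimizer of a large multiple of a class from being many thin parallel sheets, whose total area is unbounded even though every sheet, and the ambient geometry at unit scale, stays arbitrarily small.
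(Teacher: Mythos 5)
Your construction is internally sound---the calibration argument and the volume bound for unit balls in $S^{n-1}(\epsilon)\times\mathbb{R}$ are both correct for $n\geq 3$---but the object you produce is a disjoint union of $k$ tiny round spheres, each of area at most $V_0/2$. That misses the point of the proposition. The statement is there to contrast with the smooth fact that a positive lower bound on scalar curvature forces a $2$--sided stable minimal hypersurface to have small area; under your reading, the identical construction in $S^{2}(\epsilon)\times S^1$, which has scalar curvature $2/\epsilon^2$, would ``refute'' that classical theorem as well, since $k$ parallel fibers there are also calibrated and have total area $4\pi\epsilon^2 k\to\infty$. So either one reads ``hypersurface'' as connected (equivalently, asks for a bound on the area of each component), in which case your example fails, or one reads it so permissively that the proposition becomes true for reasons having nothing to do with macroscopic versus infinitesimal curvature. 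Your closing paragraph locates the contrast in connected-versus-disconnected, but that dichotomy is not specific to the macroscopic setting at all.

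The paper's example keeps the minimizer connected by decoupling the two scales inside a single fiber rather than via multiplicity: take $\Sigma$ to be the boundary of the $\varepsilon$--neighborhood of a segment of length $L$ in $\mathbb{R}^n$ (a long thin ``sausage'' diffeomorphic to $S^{n-1}$) and set $M=\Sigma\times S^1$. A unit ball in the universal cover $\Sigma\times\mathbb{R}$ meets only a bounded length of the sausage and so has volume on the order of $\varepsilon^{n-2}$, which is made at most $V_0$ by shrinking $\varepsilon$; the connected fiber $\Sigma\times\{q\}$ is calibrated exactly as in your argument, hence area-minimizing in its class, and its area is on the order of $L\varepsilon^{n-2}$, which is made at least $K$ by taking $L$ large with $\varepsilon$ fixed. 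If you replace your round sphere by this skinny sphere and take $k=1$, your calibration step carries over verbatim and the proof is complete.
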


\begin{proof}
Let $\Sigma$ be a Riemannian $(n-1)$--manifold diffeomorphic to $S^{n-1}$, obtained by taking the boundary of a very small neighborhood of a very long segment in $\mathbb{R}^n$.  Then let $M = \Sigma \times S^1$.  By making the neighborhood very small we can guarantee that the unit balls in the universal cover of $M$ have volume at most $V_0$, and by making the segment very long we can guarantee that the area of $\Sigma$ is at least $K$.
\end{proof}

For $n = 3$, the fact that these minimizing hypersurfaces have unbounded area implies that the ambient manifolds do not have a positive lower bound on scalar curvature.  Indeed, along the length of the skinny sphere, the sphere looks like a cylinder, so the manifold looks locally flat.  And yet, the unit balls in the universal cover do not look Euclidean.  They look just like the unit balls downstairs and have very small volume.  The reason why they do not look flat is that the universal cover is not a local construction.  If we were to take the universal cover of only the cylindrical portion, then the balls would indeed be flat Euclidean balls.

This example suggests that maybe instead of taking the volumes of balls in the universal cover of the manifold $M$, we should take the volumes of balls in the universal covers of \emph{balls}.  The resulting number, like scalar curvature, would depend only on local properties of $M$.  However, it turns out that we can construct an example like the one in Proposition~\ref{area} with the additional property that the unit balls in the ambient manifold are simply connected.  Thus, there does not seem to be an upper bound on volumes of minimizing hypersurfaces in terms of macroscopic scalar curvature information.

The modified example is constructed as follows.  We start with a skinny sphere $\Sigma$ as in Proposition~\ref{area}, constructed as the boundary of the $\varepsilon$--neighborhood of a segment of length $L$, where $\varepsilon$ is very small and $L$ is very large.  Let $\Sigma'$ be the disjoint union of $10L$ copies of the boundary of the $\varepsilon$--neighborhood of a segment of length $\frac{1}{10}$.  Then we take $M$ to be the disjoint union of $\Sigma \times [-\frac{1}{10}, \frac{1}{10}]$ and $\Sigma' \times [\frac{3}{10}, 2 - \frac{3}{10}]$, connected by two short identical pieces that interpolate between them.  Each of these connector pieces looks like a Morse function from $\frac{1}{10}$ to $\frac{3}{10}$, with $10L - 1$ critical points at $\frac{2}{10}$, as the level set $\Sigma$ pinches off to become the level set $\Sigma'$, like sausage links.  Together, $M$ looks like a bundle over the circle $\mathbb{R}/2\mathbb{Z}$ of length $2$, with the fibers over the interval $(-\frac{2}{10}, \frac{2}{10})$ diffeomorphic to $\Sigma$, the fibers over the interval $(\frac{2}{10}, 2 - \frac{2}{10})$ diffeomorphic to $\Sigma'$, and singular fibers at $\pm \frac{2}{10}$.  We can adjust the metric to ensure that $\Sigma$ is area-minimizing in its homology class, despite being of large area, and also ensure that every unit ball in $M$ with center in $\Sigma$ is simply connected with tiny volume.

In two dimensions, though, a condition about balls in the universal covers of balls does guarantee that the surface has small total area, as follows.

\begin{proposition}\label{surface-prop}
For every $\varepsilon$ with $0 < \varepsilon < \frac{1}{8}$, the following holds.  Let $M$ be a closed surface with the property that for any point $p \in M$, the ball of radius $1$ around any lift $\widetilde{p}$ of $p$ in the universal cover $\widetilde{B(p, 1)}$ of the unit ball at $p$ has area at most $\varepsilon$.  Then the area of $M$ is at most $8\varepsilon$.
\end{proposition}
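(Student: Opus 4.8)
The plan is to first extract a purely metric consequence of the hypothesis, then feed it into a coarea argument inside the simply connected covers, and finally assemble a global area bound from the resulting thinness. First I would show that every unit ball $B(p,1) \subseteq M$ itself has area at most $\varepsilon$. Let $\pi \co \widetilde{B(p,1)} \to B(p,1)$ be the covering projection, with $\pi(\widetilde p) = p$. Since a minimizing geodesic in $M$ from $p$ to a point $q$ with $d(p,q) < 1$ stays inside $B(p,1)$, it lifts starting at $\widetilde p$ to a path of the same length, so $q \in \pi(B(\widetilde p, 1))$; conversely, paths of length $< 1$ project to paths of length $< 1$, giving $\pi(B(\widetilde p, 1)) = B(p,1)$. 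As $\pi$ is a local isometry it does not increase area, so $\Area B(p,1) \le \Area B(\widetilde p, 1) \le \varepsilon$. This step alone is not enough: a long thin cylindrical sphere has all of its unit balls of small area while its total area is large, so the \emph{simple connectivity} of the covers $\widetilde{B(p,1)}$ must be used, not merely the area bound it produces.

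Next I would exploit that simple connectivity through the coarea inequality inside $\widetilde{B(p,1)}$. Writing $\widetilde r = d_{\widetilde{B(p,1)}}(\widetilde p, \cdot)$, we have $\int_0^1 \mathcal H^1(\widetilde r^{-1}(t))\,dt \le \Area B(\widetilde p,1) \le \varepsilon$, so there is a radius $t^\ast \in (\tfrac12, 1)$ with $\mathcal H^1(\widetilde r^{-1}(t^\ast)) \le 2\varepsilon < \tfrac14$ and with $\widetilde r^{-1}(t^\ast)$ a disjoint union of circles. Projecting, each point $p$ acquires a ``cross-section'' $\gamma_p := \pi(\widetilde r^{-1}(t^\ast))$ of total length at most $2\varepsilon$ that encloses $B(p,\tfrac12)$ and bounds the region $D_p := \pi(B(\widetilde p, t^\ast))$, which satisfies $B(p,\tfrac12) \subseteq D_p$ and $\Area D_p \le \varepsilon$. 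Thus $M$ is covered by regions of area at most $\varepsilon$, each of diameter at least $1$ but perimeter less than $\tfrac14$; qualitatively, at unit scale $M$ is everywhere as thin as a long rectangle of width $\sim \varepsilon$.

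Finally I would turn this thinness into a global bound by covering $M$ with finitely many of the $D_p$ and controlling their number---equivalently, by bounding $\diam M$---and here the main obstacle lies. The key point is that a \emph{closed} surface cannot globally be a long thin tube: the cross-sections $\gamma_p$ are unions of circles, and because $M$ has no boundary such a circle, if essential along a tube, cannot simply terminate, so following it forces the tube to close up within bounded distance. I expect to make this precise by taking a maximal $\tfrac12$--separated set $p_1, \dots, p_N$, so that the $D_{p_i}$ cover $M$ and $\Area M \le N\varepsilon$, and then showing that simple connectivity of the unit-ball covers rules out both ``wrapping'' (which would make some $\Area B(\widetilde p, 1)$ of order $1$, contradicting the hypothesis) and high-valence thin junctions, which caps the diameter and hence $N$. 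The budget $\varepsilon < \tfrac18$---equivalently a total perimeter budget below $1$---is what I expect to force the effective number of pieces down to at most $8$ and yield $\Area M \le 8\varepsilon$. Converting this topological picture into a clean combinatorial count, uniform over all closed surfaces, is the step I anticipate will require the most care.
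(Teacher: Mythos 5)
Your first two steps are sound and match what the paper uses implicitly: every unit ball of $M$ is the image under a local isometry of the unit ball around a lift, hence has area at most $\varepsilon$, and the coarea inequality in $\widetilde{B(p,1)}$ produces a level $t^\ast\in(\tfrac12,1)$ whose circles have total length at most $2\varepsilon$. But the third step, where all the content lies, has a genuine gap. A maximal $\tfrac12$--separated set $p_1,\dots,p_N$ gives $\Area M\le N\varepsilon$ only if you can bound $N$, and a diameter bound does not bound $N$: the disjoint balls $B(p_i,\tfrac14)$ carry no a priori \emph{lower} area bound on a general Riemannian surface, so a surface of diameter $2$ satisfying the hypothesis can contain arbitrarily many $\tfrac12$--separated points (picture a tiny sphere with many very thin unit-length spikes). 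Your proposed route to bounding $N$ --- ruling out ``wrapping'' and ``high-valence thin junctions'' --- is a heuristic, not an argument, and I do not see how to turn it into one.

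The paper closes this gap with two concrete devices that are missing from your proposal. First, a shortcut argument bounds $\diam M\le 2$: if two points were at distance greater than $2$, take the midpoint $p$ of a minimizing geodesic between them; in the simply connected cover $\widetilde{B(p,1)}$ the loop of $S(\widetilde p,t)$ enclosing $\widetilde p$ has length at most $2\varepsilon<2t$, while the lifted geodesic spends length at least $2t$ inside the disk that loop bounds, so rerouting the geodesic along the loop shortens it --- a contradiction with minimality. Second, instead of a separated net, the paper builds a \emph{chain} of eight balls: choose $p_{k+1}$ on the image in $M$ of the enclosing loop $L_k$, and --- crucially --- pass to the universal cover of the growing union $B(p_1,1)\cup\cdots\cup B(p_k,1)$ at each stage, so that each $L_k$ encloses at least a $(t_k-2\varepsilon)$--neighborhood of $L_{k-1}$. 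Since $t_k-2\varepsilon>\tfrac12-\tfrac14=\tfrac14$, eight balls cover a ball of radius $2\ge\diam M$ around $p_1$, giving $\Area M\le 8\varepsilon$. This chain construction, with covers of unions of balls rather than of single balls, is the idea your proposal lacks; without it (or a substitute for it) the count of pieces is not controlled.
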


\begin{proof}
First we claim that the diameter of $M$ is at most $2$.  Suppose to the contrary that there are two points in $M$ with distance greater than $2$, and let $p$ be the midpoint of a length-minimizing geodesic between them.  Using the coarea inequality we may select $t \in (\frac{1}{2}, 1)$ such that the total length of the sphere $S(\widetilde{p}, t)$ in $\widetilde{B(p, 1)}$ is at most $2\varepsilon$.

The sphere $S(\widetilde{p}, t)$ is a disjoint union of loops, and because $\widetilde{B(p, 1)}$ is simply connected, each loop bounds a disk.  Exactly one of the loops in $S(\widetilde{p}, t)$ encloses $\widetilde{p}$, and that loop has length at most $2\varepsilon$.  The geodesic through $p$ lifts to a geodesic through $\widetilde{p}$, and the portion inside the loop has length at least $2t$.  Thus, because $\varepsilon < \frac{1}{2}$, we have $2\varepsilon < 2t$ and we can homotope the geodesic to a shorter path by following the loop, giving a contradiction.

Thus, the diameter of $M$ is at most $2$.  Next we describe how to cover $M$ by at most eight unit balls.  Then the area of $M$ is at most $8\varepsilon$, proving the proposition.

Start with $p_1 \in M$ arbitrary.  Find $t_1 \in (\frac{1}{2}, 1)$ such that the sphere $S(\widetilde{p_1}, t_1)$ in $\widetilde{B(p_1, 1)}$ has total length at most $2\varepsilon$.  Find the loop $L_1$ in $S(\widetilde{p_1}, t_1)$ that encloses $\widetilde{p_1}$, and select $p_2 \in M$ to be any point on the image of $L_1$ in $M$.  

Next we consider the universal cover of $B(p_1, 1) \cup B(p_2, 1)$.  Each component of the preimage of $B(p_2, 1)$ in this covering space is covered by $\widetilde{B(p_2, 1)}$, so the ball of radius $1$ around $\widetilde{p_2}$ in the universal cover of $B(p_1, 1) \cup B(p_2, 1)$ has area at most $\varepsilon$.  Thus we may find $t_2 \in (\frac{1}{2}, 1)$ such that the sphere $S(\widetilde{p_2}, t_2)$ has length at most $2\varepsilon$, and select $p_3 \in M$ to be on the image in $M$ of the loop $L_2$ in $S(\widetilde{p_2}, t_2)$ enclosing $\widetilde{p_2}$.

Continue in this way: having selected $p_k$, take the universal cover of the union $B(p_1, 1) \cup \cdots \cup B(p_k, 1)$, select $t_k \in (\frac{1}{2}, 1)$ such that $S(\widetilde{p_k}, t_k)$ has length at most $2\varepsilon$, and select $p_{k+1} \in M$ to be on the image of the loop $L_k$ in $S(\widetilde{p_k}, t_k)$ that encloses $\widetilde{p_k}$.  Once the union $B(p_1, 1) \cup \cdots \cup B(p_k, 1)$ is all of $M$, stop.

We claim that the union $B(p_1, 1) \cup \cdots \cup B(p_k, 1)$ covers the ball of radius $\sum_{i = 1}^k (t_i - 2\varepsilon)$ around $p_1$.  Notice that $L_1$ encloses at least at $t_1$--neighborhood of $\widetilde{p_1}$.  Then, $L_2$ encloses at least a $(t_2 - 2\varepsilon)$--neighborhood around (the lift of the projection of) $L_1$, and in general each $L_i$ encloses at least at $(t_i - 2\varepsilon)$--neighborhood around $L_{i-1}$.  Thus, let $q$ be any point outside $B(p_1, 1) \cup \cdots \cup B(p_k, 1)$, and find a length-minimizing geodesic $\gamma$ from $p_1$ to $q$.  Lifting $\gamma$ to the universal cover of $B(p_1, 1)$, we can cut off a segment extending from $\widetilde{p_1}$ to $L_1$, with length at least $t_1$.  Then, lifting $\gamma$ to the universal cover of $B(p_1, 1) \cup B(p_2, 1)$, we can cut off a segment extending from $L_1$ to $L_2$, of length at least $t_2 - 2\varepsilon$, and so on.  Reaching $L_k$ we have cut off length at least $t_1 + (t_2 - 2\varepsilon) + \cdots + (t_k - 2\varepsilon)$ and still have not exhausted $\gamma$, so the distance from $p_1$ to $q$ must be greater than $\sum_{i = 1}^k (t_i - 2\varepsilon)$.

We know that the diameter of $M$ is at most $2$, and we have assumed that $\varepsilon < \frac{1}{8}$.  Thus each $t_i - 2\varepsilon$ is at least $\frac{1}{2} - \frac{1}{4} = \frac{1}{4}$, so $B(p_1, 1) \cup \cdots \cup B(p_8, 1)$ must cover all of $M$.
\end{proof}

\bibliography{z-stability-bib}{}
\bibliographystyle{amsalpha}
\end{document}